\definecolor{bleudefrance}{rgb}{0.19, 0.55, 0.91}
\definecolor{ao(english)}{rgb}{0.0, 0.5, 0.0}
\newcommand{\addcite}[0]{\ifthenelse{\boolean{showcomments}}
{\textcolor{purple}{(add cite(s)) }}{}}%
\newcommand{\hl}[1]{\ifthenelse{\boolean{showcomments}}
{\textcolor{purple}{#1}}{}}%
\def\ba{\begin{align}}
	\def\ea{\end{align}}
\newcommand{\beq}{\begin{equation}}
	\newcommand{\eeq}{\end{equation}}
\newcommand{\bq}{\begin{eqnarray}}
	\newcommand{\eq}{\end{eqnarray}}
\newcommand{\bqn}{\begin{eqnarray*}}
	\newcommand{\eqn}{\end{eqnarray*}}
\newcommand{\bee}{\begin{enumerate}}
	\newcommand{\eee}{\end{enumerate}}
\newcommand{\bi}{\begin{itemize}}
	\newcommand{\ei}{\end{itemize}}
\newcommand{\bseq}{\begin{subequations}}
	\newcommand{\eseq}{\end{subequations}}
\newcommand{\PreserveBackslash}[1]{\let\temp=\\#1\let\\=\temp}
\newcolumntype{C}[1]{>{\PreserveBackslash\centering}p{#1}}
\newcolumntype{R}[1]{>{\PreserveBackslash\raggedleft}p{#1}}
\newcolumntype{L}[1]{>{\PreserveBackslash\raggedright}p{#1}}
\newcommand{\circled}[1]{\raisebox{.5pt}{\textcircled{\raisebox{-.9pt}{\footnotesize #1}}}}
\DeclareMathOperator{\rank}{rank}
\newcommand{\toodo}[1]{  \noindent\ifthenelse{\boolean{showcomments}}
{{\color{blue} To-do: #1}}{}}
\newtheorem{remark}{Remark}
\newtheorem{theorem}{{Theorem}}
\newtheorem{lemma}[theorem]{{Lemma}}
\newtheorem{definition}{{Definition}}
\newtheorem{example}{{Example}}
\newtheorem*{inferring procedure}{{Inferring Procedure}}
\newcommand{\yingzhu}[1]{  \ifthenelse{\boolean{showcomments}}
{{\color{cyan} Yingzhu: #1}}{}}
\newcommand{\longthmtitle}[1]{\mbox{}\textup{\textbf{(#1):}}}
\newcommand{\real}{{\mathbb{R}}}
\newcommand{\integersnonnegative}{\mathbb{Z}_{\geq 0}}
\newcommand{\integerspositive}{\mathbb{Z}_{> 0}}
\newcommand{\oprocendsymbol}{\hbox{$\bullet$}}
\newcommand{\oprocend}{\relax\ifmmode\else\unskip\hfill\fi\oprocendsymbol}
\newcommand{\defeq}{\vcentcolon=}
\newcommand{\bv}{{\mathscr{B}}}
\newcommand{\Tini}{{T_{\textup{ini}}}}
\newcommand{\Tf}{{T_{\textup{f}}}}
\newcommand{\uini}{{\theta_{\textup{ini}}}}
\newcommand{\yini}{{\psi_{\textup{ini}}}}
\newcommand{\wini}{{w_{\textup{ini}}}}
\newcommand{\vini}{{v_{\textup{ini}}}}
\newcommand{\xini}{{x_{\textup{ini}}}}
\newcommand{\Up}{{\Theta_{\mathrm{p}}}}
\newcommand{\Uf}{{\Theta_{\mathrm{f}}}}
\newcommand{\uf}{{\theta_{\mathrm{f}}}}
\newcommand{\thetad}{{\theta_{\mathrm{d}}}}
\newcommand{\Yp}{{\Psi_{\mathrm{p}}}}
\newcommand{\Yf}{{\Psi_{\mathrm{f}}}}
\newcommand{\yf}{{\psi_{\mathrm{f}}}}
\newcommand{\psid}{{\psi_{\mathrm{d}}}}
\newcommand{\data}{{w_{\mathrm{d}}}}
\newcommand{\vd}{{v_{\mathrm{d}}}}
\newcommand{\vf}{{v_{\mathrm{f}}}}
\newcommand{\col}{{\textup{col}}}
\newcommand{\Var}{{\textup{Var}}}
\newcommand{\Cov}{{\textup{Cov}}}
\newcommand{\expectation}{{\mathbb{E}}}
\begin{document}


\title{Discovering Mechanistic Causality from Time Series: A Behavioral-System Approach}

\author{Yingzhu Liu$^\dag$, Shengyuan Huang$^\dag$, Zhongkui Li, Xiaoguang Yang, Wenjun Mei$^*$ 
\thanks{This work was supported in part by the National Science and Technology Major Project under grant 2022ZD0116401, and in part by the National Natural Science Foundation
of China under grants 72201007, 72431001,  T2121002, 72131001, U2241214, 62373008, and 723B1001.}
\thanks{Y. Liu, Z. Li, and W. Mei are with the College of Engineering, and the State Key Laboratory for Turbulence and Complex Systems, Peking University, Beijing, China. S. Huang and X. Yang are with the Academy of Mathematics and Systems Science, Chinese Academy of Science, Beijing, China.}
\thanks{$^\dag$ Yingzhu Liu and Shengyuan Huang contributed equally to the work.}
\thanks{$^*$ The corresponding author is Wenjun Mei (mei@pku.edu.cn)}
}


\maketitle

\pagestyle{empty}  
\thispagestyle{empty} 
\begin{abstract}

Identifying ``true causality'' is a fundamental challenge in complex systems research. Widely adopted methods, like the Granger causality test, capture statistical dependencies between variables rather than genuine driver-response mechanisms. This critical gap stems from the absence of mathematical tools that reliably reconstruct underlying system dynamics from observational time-series data. In this paper, we introduce a new control-based method for causality discovery through the behavior-system theory, which represents dynamical systems via trajectory spaces and has been widely used in data-driven control. Our core contribution is the \textbf{B}ehavior-\textbf{e}nabled \textbf{Caus}ality test (the BeCaus test), which transforms causality discovery into solving fictitious control problems. By exploiting the intrinsic asymmetry between system inputs and outputs, the proposed method operationalizes our conceptualization of mechanistic causality: variable $X$ is a cause of $Y$ if $X$ (partially) drives the evolution of $Y$. We establish conditions for linear time-invariant systems to be causality-discoverable, i.e., conditions for the BeCaus test to distinguish four basic causal structures (independence, full causality, partial causality, and latent-common-cause relation). Notably, our approach accommodates open systems with unobserved inputs. Moreover, an exploratory case study indicates the new method's potential extensibility to nonlinear systems. 
\end{abstract}

\IEEEpeerreviewmaketitle

\section{Introduction}
\subsection{Background and Motivation}

Establishing causal relations is both important and challenging in research on complex systems such as neuroscience~\cite{Bergmann2021Inferring}, earth system science~\cite{Cox2015Has,Runge2019Inferring}, economics~\cite{Sims1972Money}, and social science~\cite{Imbens2024Causal}. For these systems, controlled lab experiments either are infeasible or do not usually yield reliable results applicable to real-world scenarios. Therefore, inferring causality from observational data is fundamentally important. Ideally, one wishes to extract true causal relations, i.e., genuine driver-response mechanisms, from collected time-series data. However, current widely-adopted methods, such as the regression-based Granger causality test and other methods based on information theory or machine learning, are essentially testing statistical dependencies instead of mechanistic causality. Such limitation is due to the lack of theoretical tools that reliably represent underlying dynamical systems using offline time-series data. 

The behavior-system approach in control theory, which has been widely adopted in data-driven control in recent years, happens to provide such a tool and makes it possible to investigate causality in a more intrinsic and straightforward sense: A variable is a cause of another if the former (partially) drives the dynamic evolution of the latter. Based on the above conceptualization and the behavior-system theory, we propose in this paper a new method that discovers mechanistic causal directions from offline time-series data. This method leads to mathematically tractable results for Linear Time Invariant (LTI) systems, including open systems, i.e., the scenarios when some parts of the system inputs are unobserved. Moreover, exploratory numerical studies the potential of behavior-system-based method in discovering mechanistic causality in nonlinear systems.

\subsection{Literature Review}
\emph{Regression-based Granger causality test:} One prominent branch of model-based causal inference is Granger causality~\cite{granger1969}.
Granger causality originates from bivariate linear autoregression~\cite{Lutkepohl2005New} and evaluates whether the data of one time series improves the forecast of another. 
This method is mathematically rigorized by implementing a statistical hypothesis test~\cite{sargent1976classical}.
Granger causality has been further extended to a graphic approach that visualizes causal relations among multiple variables by a directed graph: each time series is treated as a node, and the edges represent Granger-causal relations between variables~\cite{Basu2015Network,Eichler2012Graphical,Dimovska2021Control}.
Despite its wide application, a major limitation of the Granger causality test is that it requires the time series to be stationary~\cite{Granger1988Recent}. Moreover, Granger causality is more of a statistical concept than what is normally understood as the true mechanistic causality.
 
\emph{Other model-free methods:} Besides Granger causality tests, alternative frameworks have been proposed based on information theory~\cite{Barnett2009Grangera,Vicente2011Transfer, Amblard2011Directed,Hlavackova-Schindler2007Causality} or machine learning~\cite{Peters2017Elements,Guo2021Survey}.
Information-theoretic measures, such as
transfer entropy~\cite{Vicente2011Transfer, Barnett2009Grangera}, mutual information~\cite{Hlavackova-Schindler2007Causality,Solo2008Causality} and directed information~\cite{Amblard2011Directed},  
are used to quantify time-directed information transfer between jointly dependent variables.
These measures are particularly useful in analyzing complex systems where the conditions for the Granger causality test do not apply.
In particular, transfer entropy is equivalent to Granger causality when the variables involved follow a Gaussian distribution~\cite{Barnett2009Grangera}.
The development of machine learning provides a new perspective for causal inference. Approaches like neural networks are applied to learn complex, nonlinear causal relations in high-dimensional data~\cite{Tank2021Neurala}. However, a major challenge of the aforementioned methods is the interpretability of the underlying mechanisms that drive the causal relations.
Like Granger causality, causal relations tested by those methods are essentially certain forms of statistical relations instead of mechanistic causality.

\emph{Behavior-system theory:} One promising way to describe unknown systems via data is rooted in Behavioral systems theory, which is a fundamental tool in data-driven control. This theory offers an alternative non-parametric representation of discrete-time LTI systems~\cite{Willems1986Time,Willems2005Note,Markovsky2006Exact}.
Unlike traditional system identification techniques, it defines a system as a set of trajectories rather than identifying explicit models. A key result in~\cite{Willems1986Time} known as the \emph{fundamental lemma}, provides conditions for the existence of the non-parametric representation and establishes a criterion for determining whether a trajectory belongs to the system. Since then, a range of new algorithms has been proposed for system identification, data-driven simulation, and data-driven (predictive) control~\cite{Markovsky2006Exact, Markovsky2008Datadriven, Coulson2019DataEnabled}. For more details, readers are referred to the review paper~\cite{IvanMarkovsky2021Behavioral}. To the best of our knowledge, the behavior-system theory has not been used for causality discovery.

\subsection{Statement of Contributions}
In this paper, we conceptualize causality from a control-theoretic perspective and propose a method that unveils mechanistic causality by solving linear equations constructed from time-series data. Specifically, our contributions are summarized as follows.

Firstly, we propose a plain and straightforward conceptualization of causality: a variable is a cause of another variable if the former (partially) drives the dynamic evolution of the latter, i.e., if the former is a control input and the latter is an output of an underlying dynamical system. Such conceptualization sets a high criterion for causality discovery and ensures that the methods aligned with it detect true mechanistic causal relations rather than statistical correlations.

Secondly, we conduct a rigorous analysis for the scenarios when the underlying dynamical systems are linear and time-invariant. We propose a set of tests that, under certain conditions, can distinguish four different types of causal relations between two vector variables based on their offline time-series data. The four possible relations are: 1) independence, i.e., the two variables are collected from different systems and are mutually independent; 2) unilateral full causality, i.e., one variable is the only input of the underlying system, while the other is an output; 3) unilateral partial causality, i.e., one variable is an input (possibly with the presence of other unknown inputs) and the other variable is an output; 4) latent-common-cause relation, i.e., these two variables are both outputs of an underlying system and the input data is not collected. Particularly, for cases 2) and 3), our method is able to detect the causal direction, i.e., which one is the input and which one is the output. We characterize the conditions for the underlying dynamical systems to be causality-discoverable and provide a rigorous proof. 

Thirdly, an exploratory case study indicates that the widely studied data-enabled predictive control (DeePC)~\cite{Coulson2019DataEnabled}, which is also based on the behavior-system approach, could be leveraged to infer causal relations between variables in nonlinear systems, which indicates potentially broad and promising applicability of the behavior-system theory in causality discovery.

\subsection{Organization}
The remainder of the paper is organized as follows.
In Section~\ref{sec:preliminaries}, we briefly introduce Granger causality and behavioral systems theory. In Section~\ref{sec:problem}, we present the proposed concept of  \emph{mechanistic causality} and formally define the problem.
Section~\ref{sec:LTI} contains the main theoretical results and illustrative examples.
We conclude the paper and discuss future directions in Section~\ref{sec:conclusion}.

\section{Notations, Definitions, and Preliminaries}\label{sec:preliminaries}

\subsection{Basic Notations}
Let \( \mathbb{R} \) denote the set of real numbers. 
We use \( \mathbb{Z}_{\ge 0} \) and \( \mathbb{Z}_{> 0} \) to denote the sets of nonnegative and positive integers, respectively. 
Denote by 
\( \mathbf{1}_m \in \mathbb{R}^m \) the vector of all ones. 
We define the index set \( \overline{T} = \{1, 2, \dots, T\} \). 
Denote by $\mathcal{D}=\{\mathcal{D}(t)\}_{t\in \overline{T}}\subset \mathbb{R}^q$ an offline data trajectory of length \( T \), where \( \mathcal{D}(t) \) is the value of the trajectory at time \( t \). 
The notation \( \col(u, y) \) denotes vertical concatenation, i.e., \( \col(u, y) := [u^\top, y^\top]^\top \). 
For a matrix or vector \( H \), we use \( H_{[k,\,k+L]} \) to denote the submatrix consisting of rows from the \( k \)-th to the \( (k+L) \)-th (including the $(k+L)$-th row).

\subsection{Granger Causality}
The Granger causality test is one of the most popular tools for inferring causal relation from time series. Essentially, Granger causality is a statistical concept. To put it simply, it interprets ``causality'' as whether one time series helps improve the prediction of another time series. The Granger causality test relies on a pre-assumption that the time series are weakly stationary. Otherwise, the time series need to be differenced until they are weakly stationary. A $\overline T$-length time series $\{\theta(t)\}_{t\in \overline{T}}$ is weakly stationary if $\expectation (\theta(t)), \Var(\theta(t))$ are constant for all $t\in \overline T$, and $\Cov(\theta(t), \theta(t-k))$ depends only on $k$ but not $t$. A formal definition of Granger causality is given below.
\begin{definition}\longthmtitle{\hspace{1sp}\cite[Granger Causality]{granger1969}}
    Given two weakly stationary time series $\theta = \{\theta(t)\}_{t\in \overline{T}}\subset \mathbb{R}^m$ and $\psi =\{\psi(t)\}_{t\in \overline{T}}\subset \mathbb{R}^p$, denote by $\overline{\theta}_t$ and $\overline{\psi}_t$ the sets of all the current and past values of $\theta$ and $\psi$ until time $t$, respectively. We say that $u$ Granger-causes $y$ if and only if 
    \begin{align*}
        \Var(y(t+1)|\overline{u}_t,\overline{y}_t) < \Var(y(t+1)|\overline{y}_t)
    \end{align*}
    for all $t\in \overline T$, where the variations are estimated using optimal linear prediction functions.
\end{definition}
As implied by the definition above, Granger causality is based on regression analysis. Therefore, it reflects statistical dependencies rather than any driver-response mechanism.
\subsection{Behavioral System Theory}
The behavioral-system theory represents a dynamical system as a set of trajectories in a non-parametric manner. In this paper, we follow the definitions and notations~\cite{Markovsky2006Exact}.
\begin{definition}[Behavior systems]\label{def:behavioral system}
A \emph{dynamical system} is a $3$-tuple $(\integersnonnegative,\mathbb{W},\bv)$ where $\integersnonnegative$ is the discrete-time axis, $\mathbb{W}$ is a signal space, and $\bv\subseteq \mathbb{W}^{\integersnonnegative}$ is the set of all the possible trajectories of the signal, referred to as the behavior.
\begin{enumerate}[label=(\roman*).]
\item $(\integersnonnegative,\mathbb{W},\bv)$ is \emph{linear} if $\mathbb{W}$ is a vector space and $\bv$ is a linear subspace of $\mathbb{W}^{\integersnonnegative}$.\label{item:systemdef1}
\item $(\integersnonnegative,\mathbb{W},\bv)$ is \emph{time invariant} if $\bv \subseteq \sigma\bv$ where $\sigma \colon \mathbb{W}^{\integersnonnegative}\to \mathbb{W}^{\integersnonnegative}$ is the forward time shift defined by $(\sigma w)(t)=w(t+1)$ and $\sigma\bv=\{\sigma w \mid w\in \bv\}$.\label{item:systemdef2}
\end{enumerate}
\end{definition}
\smallskip
We denote the class of systems $(\integersnonnegative,\real^q,\bv)$ depicted by Definition~\ref{def:behavioral system} as $\mathscr{L}^q$. Next, we define trajectories truncated to a window of length $T$ by a set 
\begin{align*}
    \bv_T=\{w\in(\real^q)^T\,|\,\exists v\in\bv \text{ s.t. } w(t)=v(t),\,1\leq t\leq T\}.
\end{align*}
 For a linear time-invariant system $\bv$, given any input/output partitioning $w:= \col(u,y)$, the input/state/output (i/s/o) description is given by 
 \begin{small}
     \begin{align*}
        \bv(A,B,C,D):=\Big{\{}&\col(u,y)\in(\real^q)^{\integersnonnegative}\,\Big|\,
         \exists\, x\in(\real^n)^{\integersnonnegative} \\
         &\text{s.t. } \sigma x=Ax+Bu, y= Cx+Du \Big{\}}.
    \end{align*}
\end{small}%
A \emph{minimal representation} is referred to as the i/o/s representation with the smallest state dimension, and its order is denoted by $\bm{n}(\bv)$.
Define the dimension of the input of $\bv$ by \(\bm{k}(\bv)\), the dimension of the output by \(\bm{p}(\bv)\).
We define the observability matrix by $\mathscr{O}_\tau(C,A):=\col(C,CA,\cdots,CA^{\tau-1})$ and the Toeplitz matrix 
\begin{small}
    \begin{equation*}
    \mathscr{T}_\tau(A,B,C,D):=\begin{bmatrix}
    D & 0 & \cdots & 0 \\
    CB & D & \cdots & 0 \\
    \vdots & \ddots & \ddots & \vdots \\
    CA^{\tau-2}B & \cdots & CB & D
    \end{bmatrix}\,.
\end{equation*}
\end{small}%
The lag of a system $\bv\in\mathscr{L}^q$, denoted by $\bm{\ell}(\bv)$, is defined as the smallest $\ell\in\integerspositive$ such that the observability matrix $\mathscr{O}_{\ell}(A,C)$ has full rank, i.e., $\rank(\mathscr{O}_{\ell}(A,C))=\bm{n}(\bv)$. The set of all the systems, with $q$-dimension signal space, order $n$, $k$-dimension input space, and lag $l$, is denoted by
\begin{align*}
\partial \mathscr{L}^{q,n}_{k,\ell}=\{\bv \in \mathscr{L}^q|  &\bm{k}(\bv)=k, \bm{p}(\bv)=q-k,\\
&\bm{n}(\bv)=n, \bm{\ell}(\bv)=\ell\}.
\end{align*}
A depth-$L$ \emph{Hankel matrix} of a time series $w\in(\real^q)^T$ is
\begin{align}
\mathscr{H}_{L}(w)\defeq
\begin{pmatrix}
&w(1) &\cdots &w(T-L+1) \\
&\vdots &\ddots &\vdots \\
&w(L) &\cdots &w(T)
\end{pmatrix}\,.
\label{Hankel}
\end{align}
Below, we cite two useful lemmas needed in our analysis.
\begin{lemma}\longthmtitle{\hspace{1sp}\cite[Corollary 19]{nonpersistentexcitation2022}}\label{lm:new_fundamental_lemma}
Let the $T$-length offline data \(\data\) be generated by \(\bv \in \partial \mathscr{L}^{q,n}_{k,l}\).
Then, image\(\mathscr{H}_L(\data)\) equals $\bv_L$, for \(L > \ell\) if and only if rank\((\mathscr{H}_L(\data) = mL + n\). 
\end{lemma}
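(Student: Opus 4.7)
The plan is to reduce the equivalence to a dimension-counting argument after first establishing the unconditional inclusion $\mathrm{image}(\mathscr{H}_L(\data)) \subseteq \bv_L$.

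First I would argue the inclusion. Each column of $\mathscr{H}_L(\data)$ is a length-$L$ window of the trajectory $\data \in \bv$, and therefore sits in $\bv_L$ by definition. Since $\bv$ is linear, $\bv_L$ is a linear subspace of $(\real^q)^L$, so the span of the columns of the Hankel matrix is itself contained in $\bv_L$. This inclusion requires neither persistent excitation nor any rank assumption.

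Second, I would compute $\dim(\bv_L)$ for $L > \ell$. Fix a minimal i/s/o representation $\bv(A,B,C,D)$ of order $\bm{n}(\bv)=n$ and input dimension $\bm{k}(\bv)=k$. Consider the linear map
\begin{equation*}
\Phi:(x_0,u)\in\real^n\times(\real^k)^L \mapsto \col(u,y)\in\bv_L,
\end{equation*}
where $y$ is generated by the i/s/o dynamics from initial state $x(1)=x_0$ under input $u$. Surjectivity holds by the very definition of $\bv_L$. Injectivity is the step in which the lag enters: if $\Phi(x_0,u)=\Phi(x_0',u')$ then $u=u'$ from the first $k$ components, and the first $\ell$ output equations reduce, after canceling the identical input contributions through the Toeplitz matrix $\mathscr{T}_\ell(A,B,C,D)$, to $\mathscr{O}_\ell(C,A)(x_0-x_0')=0$. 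Because $L>\ell$ and $\ell$ is the lag of $\bv$, the observability matrix $\mathscr{O}_\ell(C,A)$ has full column rank $n$, forcing $x_0=x_0'$. Hence $\Phi$ is a bijection and $\dim(\bv_L)=kL+n$.

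With these two ingredients in hand, both directions of the equivalence are routine. For the forward direction, $\mathrm{image}(\mathscr{H}_L(\data))=\bv_L$ immediately gives $\rank(\mathscr{H}_L(\data))=\dim(\bv_L)=kL+n$, matching the stated $mL+n$ under the identification $m=k$. For the converse, the rank hypothesis yields $\dim(\mathrm{image}(\mathscr{H}_L(\data)))=kL+n=\dim(\bv_L)$, and an inclusion between finite-dimensional subspaces of equal dimension must be an equality.

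The only nontrivial ingredient is the injectivity of $\Phi$, which is precisely where the hypothesis $L>\ell$ is consumed; once $\dim(\bv_L)=kL+n$ is established, the lemma follows as a one-line consequence of linear algebra.
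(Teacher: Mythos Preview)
The paper does not supply its own proof of this lemma; it is quoted as Corollary~19 of an external reference and used as a black box, so there is no in-paper argument to compare against. That said, your proposal is correct and is the standard dimension-counting route: the unconditional inclusion $\mathrm{image}\,\mathscr{H}_L(\data)\subseteq\bv_L$ (each Hankel column is a length-$L$ window of a trajectory, and $\bv_L$ is a subspace) together with the formula $\dim\bv_L=kL+n$ via the bijection $\Phi$ reduces the equivalence to an inclusion of finite-dimensional subspaces of equal dimension. Your identification of where the hypothesis $L>\ell$ is actually consumed---injectivity of $\Phi$ through the full column rank of $\mathscr{O}_\ell(C,A)$---is exactly right, and in fact $L\ge\ell$ already suffices for that step. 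The only cosmetic remark is the one you already make: the paper's ``$mL+n$'' should be read as $kL+n$ with $k=\bm{k}(\bv)$.
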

\begin{lemma}\longthmtitle{\hspace{1sp}\cite[Lemma 1]{Markovsky2008Datadriven}}\label{lm:unique_ini}
    Let $\bv\in\partial\mathscr{L}_{k,\ell}^{q,n}$ and $\bv(A,B,C,D)$ be a minimal i/s/o representation.
    Given an initial system data $\col(u_{\textup{ini}}, y_{\textup{ini}})\in\bv_{\Tini}$, let $\Tini,\Tf\in\integerspositive$ with $\Tini\geq \ell$ and $\col(u_{\textup{ini}},u_{\mathrm{f}},y_{\textup{ini}},y_{\mathrm{f}})\in\bv_{\Tini+\Tf}$.  Then there exists a \emph{unique} $\xini\in\real^{n}$ such that
    \begin{equation}\label{eq:unique_xini}
        y_{\mathrm{f}} = \mathscr{O}_{\Tf}(A,C)\xini+\mathscr{T}_{\Tf}(A,B,C,D)u_{\mathrm{f}}\,.
    \end{equation}
\end{lemma}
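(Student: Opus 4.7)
The plan is to separate existence from uniqueness, resting the two on different ingredients: existence on the definition of the i/s/o representation, and uniqueness on the observability of the minimal representation invoked through $\Tini\ge\ell$.

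For existence, since $\col(u_{\textup{ini}}, u_{\mathrm{f}}, y_{\textup{ini}}, y_{\mathrm{f}})\in\bv_{\Tini+\Tf}$ and $\bv(A,B,C,D)$ is an i/s/o representation of $\bv$, by the definition of $\bv(A,B,C,D)$ there exists a state trajectory $\{x(t)\}_{t=1}^{\Tini+\Tf+1}$ satisfying $x(t+1)=Ax(t)+Bu(t)$ and $y(t)=Cx(t)+Du(t)$ on the whole window, where $u:=\col(u_{\textup{ini}},u_{\mathrm{f}})$ and $y:=\col(y_{\textup{ini}},y_{\mathrm{f}})$. I would set $x_{\textup{ini}}:=x(\Tini+1)$, iterate the state recursion forward from time $\Tini+1$ to $\Tini+\Tf$, substitute the resulting $x(\Tini+t)$ into the output equation, and stack the future outputs into one column vector. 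This produces exactly $y_{\mathrm{f}} = \mathscr{O}_{\Tf}(A,C)\,x_{\textup{ini}} + \mathscr{T}_{\Tf}(A,B,C,D)\,u_{\mathrm{f}}$ by a direct calculation.

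For uniqueness, I would argue at the level of state trajectories. Suppose two state sequences $\{x(t)\}$ and $\{x'(t)\}$ both generate $(u,y)$ on the window, and let $\Delta(t):=x(t)-x'(t)$. Since the input is common, $\Delta(t+1)=A\Delta(t)$, and since the output is common, $C\Delta(t)=0$ for every $t$. Stacking the latter for $t=1,\ldots,\Tini$ yields $\mathscr{O}_{\Tini}(A,C)\,\Delta(1)=0$. Because $\bv(A,B,C,D)$ is minimal with lag $\ell$, the matrix $\mathscr{O}_{\ell}(A,C)$ already has full column rank $n$; appending more block rows preserves this, so $\mathscr{O}_{\Tini}(A,C)$ remains injective once $\Tini\ge\ell$. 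Therefore $\Delta(1)=0$, which propagates to $\Delta(\Tini+1)=0$, giving $x(\Tini+1)=x'(\Tini+1)$ and hence uniqueness of $x_{\textup{ini}}$.

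The main obstacle I expect is the interpretation of \emph{unique}: the equation $y_{\mathrm{f}}=\mathscr{O}_{\Tf}\,x_{\textup{ini}}+\mathscr{T}_{\Tf}\,u_{\mathrm{f}}$ by itself does not pin $x_{\textup{ini}}$ down, since $\mathscr{O}_{\Tf}$ need not be injective when $\Tf<\ell$. The uniqueness must instead be imported from the past window through the observability condition $\Tini\ge\ell$, which fixes the state at time $\Tini+1$ given $(u_{\textup{ini}},y_{\textup{ini}})$. Keeping this conceptual separation clear, rather than conflating it with a rank property of $\mathscr{O}_{\Tf}$ that is not assumed, is the delicate part of the proof; the remaining matrix algebra is routine.
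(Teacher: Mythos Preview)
The paper does not supply its own proof of this lemma; it is quoted verbatim as \cite[Lemma~1]{Markovsky2008Datadriven} and used as a black box in the proof of Theorem~\ref{thm:causality-discoverable-theorem}. So there is no in-paper argument to compare against.

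That said, your proposal is correct and matches the standard proof of this fact. You have also correctly identified the one genuinely subtle point: uniqueness of $x_{\textup{ini}}$ does \emph{not} follow from equation~\eqref{eq:unique_xini} alone (since $\mathscr{O}_{\Tf}$ need not be injective), but is instead inherited from the past window via injectivity of $\mathscr{O}_{\Tini}(A,C)$ when $\Tini\ge\ell$. The paper's usage of the lemma confirms this reading: in the proof of Theorem~\ref{thm:causality-discoverable-theorem} it invokes Lemma~\ref{lm:unique_ini} precisely to say that ``$\xini$ is uniquely determined by $\wini$, since $\Tini>\ell$,'' i.e., the initial data, not the future equation, pins down the state. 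Your existence argument (pick any compatible state trajectory, set $x_{\textup{ini}}=x(\Tini+1)$, unroll) and your uniqueness argument (difference of two compatible state trajectories lies in $\ker\mathscr{O}_{\Tini}(A,C)=\{0\}$) are both sound.
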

\section{Conceptualization of Causality and Problem Statement}\label{sec:problem}
\subsection{Conceptualization of Causality}

With the behavior-system theory, we are able to conceptualize and test causality in a more straightforward and intrinsic way. Instead of statistical dependencies, we care about the genuine causal relations, interpreted as whether one variable (partially) drives the dynamic evolution of another variable. This aligns with the classical input-output relation in control systems and is formalized as follows.
\begin{definition}\longthmtitle{Mechanistic Causality}\label{Def:mech-causality}
    A vector variable $\theta$ is mechanistically causal to another vector variable $\psi$ if there exist functions $f$ and $g$, a latent state variable $x$ and possibly a latent input $v$, such that the joint trajectory \( (\theta, \psi) \) satisfies the following dynamical system with the input \( u(t) = \theta(t) \) and the output \( y(t) = \psi(t) \):
    \begin{equation}\label{eq:causality_def}
        \left\{
        \begin{aligned}
           x(t+1)&=f(x(t); u(t),v(t))\,,\\
             y(t) &= g(x(t);u(t),v(t))\,.
        \end{aligned}
        \right.
    \end{equation}
\end{definition}
The above definition implicitly assumes that the causal effect of the input on the output is instant and deterministic, without any time delay or noise. Causality discovery in delayed or stochastic systems is of great practical importance. New mathematical tools are needed to solve these problems. In this paper, we focus on the prototype problem: deterministic LTI systems with no time delay.

\subsection{Problem Formulation}

\begin{figure*}[htb]
    \centering
    \includegraphics[width=0.99\linewidth]{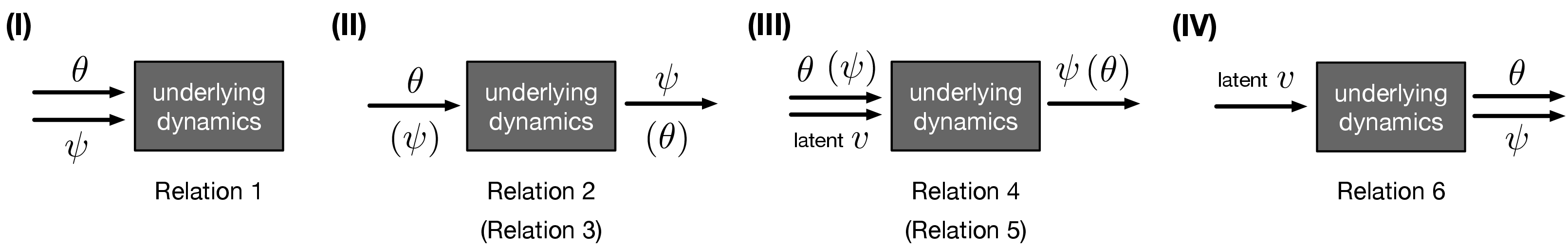}
    \caption{Four possible causal relations between variables $\theta$ and $\psi$, and the associated six possible relations. \textbf{(I)} independence; \textbf{(II)} full causality; \textbf{(III)} partial causality; \textbf{(IV)} latent-common-cause relation.}
    \label{fig:input_output}
\end{figure*}

In this paper, we study how to discover mechanistic causality specified in Definition~\ref{Def:mech-causality}, given the time series of two vector variables generated from an underlying LTI system $\bv(A,B,C,D)\in\partial \mathscr{L}_{k,l}^{q,n}$:
\begin{equation}\label{eq:underlying-system}
    \begin{cases}
    \displaystyle x(t+1) & = Ax(t)+Bu(t),\\
    \displaystyle y(t) & = Cx(t)+Du(t),
    \end{cases}
\end{equation}
where $u(t)\!=\col\big( u_1(t),u_2(t) \big)$, $y(t)\!=\col\big( y_1(t),y_2(t) \big)$, and
\begin{align*}
    B = 
    \begin{bmatrix}
        B_1 & B_2
    \end{bmatrix},
    C = 
    \begin{bmatrix}
        C_1\\
        C_2
    \end{bmatrix},
    D = 
    \begin{bmatrix}
        D_{11} & D_{12}\\
        D_{21} & D_{22}
    \end{bmatrix}.
\end{align*}
All the aforementioned matrices and vectors are of proper dimensions. The matrices $A,B,C,D$, the input dimension $k$, the output dimension $q-k$, and the order $n$ are all unknown.

The observation data collected are two $T$-length time series $\theta_d\in (\mathbb{R}^m)^{T}$ and $\psi_d\in (\mathbb{R}^p)^{T}$ generated from the above system, where $m$ and $p$ are the dimensions of the two variables $\theta$ and $\psi$ respectively. The mechanistic causal relation between $\theta$ and $\psi$ are determined by their roles in the above underlying system, categorized into the following four basic causal structures with six possible relations, visually illustrated by Fig.~\ref{fig:input_output}. 
\begin{enumerate} [label=(\Roman*)]
    \item \emph{Independence}: The variables $\theta$ and $\psi$ are mutually independent. This case can be incorporated into our framework by letting 
    \begin{align*}
        \theta(t)=u_1(t),\quad \psi(t)=u_2(t)
    \end{align*}
    i.e., they are two free inputs. This is Relation 1.
    \item \emph{Full causality}: one variable drives the evolution of another. This structure could be considered as the scenarios when $\theta$ is the system input and $\psi$ is the system output, or the other way around. That is,
    \begin{align*}
        \text{either}\quad & \theta \to \psi,\text{ (Relation 2)}\\
        &\text{i.e., } \theta(t)=u(t),~\psi(t) = y(t),\\
        \text{or}\quad & \psi \to \theta,\text{ (Relation 3)}\\
        &\text{i.e., } \psi(t)=u(t),~\theta(t) = y(t).
    \end{align*}
    \item \emph{Partial causality}: one of the variables is a system input and the other is a system output, with the presence of other latent input not collected in the data. That is, there exists a latent input $v$ such that
    \begin{align*}
        \text{either}\quad & \theta,\, v \to \psi, \text{ (Relation 4) i.e.,}\\ &\theta(t)=u_1(t),~v(t)=u_2(t),~\psi(t) = y(t),\\
        \text{or}\quad & \psi,\, v \to \theta, \text{ (Relation 5) i.e.,}\\ &\psi(t)=u_1(t),~v(t)=u_2(t),~\theta(t) = y(t).
    \end{align*}
    \item \emph{Latent-common-cause relation}: A common latent input $v$ drives the evolution of both $\theta$ and $\psi$. That is
    \begin{align*}
        v\to & \,\,\theta,\, \psi,\text{ (Relation 6)}\\
        & \text{i.e., } v(t)=u(t),~\theta(t)=y_1(t),~\psi(t)=y_2(t).
    \end{align*}
\end{enumerate}

Our goal is to distinguish the above six possible relations based on the collected data $w_d = \col(\theta_d,\psi_d)$. Intuitively, whether this is feasible depends on how the time series are generated. For an extreme example, if the system input is constantly zero, then it is impossible to infer causal relations correctly from the collected data. In this paper, we impose some requirements for the quality of the time series $w_d$, referred to as \emph{identifiable time series}. Such requirements are widely adopted in data-driven control~\cite{Willems2005Note,Coulson2019DataEnabled,Markovsky2023Identifiability}.
\begin{definition}\label{ass:data}\longthmtitle{Identifiable Time Series}
    For a $T$-length time series $\data=\col(\thetad,\psid)\in(\real^{m+p})^T$ collected from an unknown discrete-time LTI system $\bv\in\partial \mathscr{L}_{k,\ell}^{q,n}$ according to the four causal structures in Fig~\ref{fig:input_output},
    $\data$ is an identifiable time series if the observed data $(\theta_d,\,\psi_d)$ and the uncollected latent signal $\vd$ satisfy:
    \begin{enumerate}[label = (\roman*)]
        \item If (I), (III), or (IV) occurs, there exists $\Tini> \ell$ such that $\rank(\mathscr{H}_{\Tini+\Tf})(\overline\data)=k(\Tini+\Tf)+n$. Here $\overline\data=\col(\thetad,\psid,\vd)\in \bv_T$ and $v_d$ is the time series of the hidden variable $v$, which is not collected and thus unknown.
        \item If (II) occurs, there exists $\Tini> \ell$ such that $\rank(\mathscr{H}_{\Tini+\Tf})(\data)=k(\Tini+\Tf)+n$, where $\data \in \bv_T$. 
    \end{enumerate}
\end{definition}
\begin{remark}
The rank condition in Definition~\ref{ass:data} is satisfied with probability one, when the input signal is sampled i.i.d. from a non-degenerate (i.e., nontrivial) continuous distribution and the time horizon \( T \) is sufficiently large. 
\end{remark}
\section{Causal Inference for Deterministic LTI systems}\label{sec:LTI}
In this section, we propose a behavioral-system approach that transforms causal discovery into solving a fictitious control problem. Suppose we have collected an identifiable $T$-length offline trajectory $w_d=(\theta_d,\psi_d)$ generated from an underlying system $\bv\in\partial\mathscr{L}_{k,\ell}^{q,n}$, i.e., $\data\in \bv_T$. Here the variables $\theta$ and $\psi$ obey one the six possible causal relations specified in Section III.B. Suppose we know the underlying system's lag or an upper bound of it. Pick from $w_d$ a piece of $\Tini$-length trajectory $\wini=\col(\uini,\yini)\in\bv_{\Tini}$ with $\Tini>\ell$. According to Lemma~\ref{lm:new_fundamental_lemma}, any $T_f$-length ($\Tf\in \mathbb{N}_+$) time series $(\uf,\yf)$ is a legal trajectory following $\wini$ if there exists $g\in\real^{T-\Tini-\Tf+1}$ such that
\begin{equation} \label{eq:key_equation}
\begin{bmatrix}
 \Up \\
 \Yp \\
 \Uf \\
 \Yf
 \end{bmatrix}g=
 \begin{bmatrix}
\uini \\
\yini \\
 \uf  \\
 \yf
 \end{bmatrix}\,,
 \end{equation}
 where
 \begin{equation}\label{eq:Hankel_partition}
     \begin{pmatrix}
\Up \\ \Uf 
\end{pmatrix}\defeq \mathscr{H}_{\Tini+\Tf}(\thetad), \quad
\begin{pmatrix}
\Yp \\ \Yf 
\end{pmatrix}\defeq \mathscr{H}_{\Tini+\Tf}(\psid)\,.
 \end{equation}

We know that the inputs of a system, which are free variables, should behave intrinsically differently than the outputs, which are dependent variables. Since equations~\eqref{eq:key_equation} and~\eqref{eq:Hankel_partition} correctly represent the underlying system in a finite-time horizon, the input-output asymmetry must be reflected in these equations and can thus be leveraged to distinguish causal relations. This idea leads to the following \textbf{B}ehavior-\textbf{e}nabled \textbf{Caus}ality test (BeCaus test).

\begin{definition}[BeCaus Test]\label{def:becaus-test}
    Consider a $T$-length identifiable time series $\data=\col(\thetad,\psid)\in(\real^{m+p})^T$. Let $\Up,\Uf,\Yp,\Yf$ be given by equation~\eqref{eq:Hankel_partition} and let $\uini, \yini$ be the first $\Tini$-length series of $\thetad, \psid$, with $\Tini>\ell$. Set $\Tf=2$ and denote by $\uf=\col(\theta_{\mathrm{f}}(1),\theta_{\mathrm{f}}(2))\in (\real^m)^2$ and $\yf=\col( \yf(1) ,\yf(2))\in (\real^p)^2$ the two-step future trajectory. We test whether the following conditions hold:
    \begin{enumerate}[label=\circled{\arabic*}]
        \item For any $\uf$, equation \eqref{eq:key_equation} yields a unique solution $\yf$.
        \item For any  $\yf$, equation~\eqref{eq:key_equation} yields a unique solution $\uf$.
        \item There exist $\uf(1),\, \yf(1)$ and $\yf(2)$ such that equation
        \begin{equation}\label{eq:key_equation u}
        \begin{bmatrix}
         \Up \\
         \Yp \\
         \Uf_{[1,m]}\\
         \Yf_{[1,m]}
         \end{bmatrix}g=
         \begin{bmatrix}
        \uini \\
        \yini \\
        \theta_{\mathrm{f}}(1)\\
        \yf(1)
        \end{bmatrix},    
        \end{equation}
    with $g$ as the unknown variable, admits at least one solution, while equation~\eqref{eq:key_equation}, with $g$ and $\uf(2)$ as the unknown variables, admits no solution;
    \item There exist $\uf(1),\, \yf(1)$ and $\uf(2)$ such that equation~\eqref{eq:key_equation u}, with $g$ as the unknown variable, admits at least one solution, while equation~\eqref{eq:key_equation}, with $g$ and $\yf(2)$ as the unknown variables, admits no solution.

        \end{enumerate}
Based on the results of the above tests, the causal relations between $\theta$ and $\psi$ is determined as follows:
    \begin{table}[ht]
    \centering
    \caption{Causal relations inferred by the BeCaus test. Here  \ding{51} means ``True'', while \ding{55} means ``False''.}
    \begin{tabular}{c|c|c|c|c|c}
         \circled{1} & \circled{2} &\circled{3} &\circled{4} & Structure & Causal relation\\\hline
        \ding{55} & \ding{55} & \ding{55} & \ding{55} & (I) & \, Relation 1 $\big($no relation$\big)$\,\,\,\\
        \ding{51} & \ding{55}  & \ding{51} & \ding{55} & (II) & \, Relation 2 $\big(\theta\rightarrow \psi\big)$\,\,\,\,\,\,\,\,\,\,\,\,\\
        \ding{55} & \ding{51} & \ding{55} & \ding{51} & (II) & \, Relation 3 $\big(\psi\rightarrow \theta\big)$\,\,\,\,\,\,\,\,\,\,\,\,\\
        \ding{55} & \ding{55} & \ding{51} & \ding{55} & (III) & \, Relation 4 $\big(\theta,\, (v)\rightarrow \psi\big)$\\
        \ding{55} & \ding{55} & \ding{55} & \ding{51} & (III) & \, Relation 5 $\big(\psi,\, (v)\rightarrow \theta\big)$\\
        \ding{55} & \ding{55} & \ding{51} & \ding{51} & (IV) & \, Relation 6 $\big((v)\rightarrow \theta,\, \psi\big)$\\
    \end{tabular}
    \label{tab:inffering_results}
\end{table}
\end{definition}
Now we investigate for what systems the BeCaus test with identifiable time series always leads to correct results. Such systems are referred to as \emph{causality-discoverable systems}. Below, we present sufficient conditions for a system to be causality-discoverable.
\begin{theorem}\longthmtitle{Causality-Discoverable Systems}\label{thm:causality-discoverable-theorem}
Given any identifiable time series $\data=\col(\thetad,\psid)\in(\real^{m+p})^T$ generated by an underlying LTI system~\eqref{eq:underlying-system} according to one of the six scenarios specified in Section III.~B, the BeCaus test in Definition~\ref{def:becaus-test} always leads to a correct result if the matrices $C$ and $D$ in~\eqref{eq:underlying-system} satisfies $D_{11}\neq 0$, $D_{22} \neq 0$, and neither $[C_1, D_{11}, D_{12}]$ nor $[C_2, D_{21}, D_{22}]$ has a full row rank;
\end{theorem}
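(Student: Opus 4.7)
My plan is to prove the theorem by case analysis over the six possible causal relations. The key technical tools are the fundamental lemma (Lemma~\ref{lm:new_fundamental_lemma}), which lets me replace the Hankel-matrix statement in equation~\eqref{eq:key_equation} by the trajectory-level statement ``the concatenation of $(\uini,\yini)$ and $(\uf,\yf)$ lies in the behavior $\bv_{\Tini+\Tf}$, possibly after extending by a latent input trajectory,'' and Lemma~\ref{lm:unique_ini}, which gives the explicit formula
\begin{equation*}
\yf=\mathscr{O}_{\Tf}(A,C)\,\xini+\mathscr{T}_{\Tf}(A,B,C,D)\,\uf.
\end{equation*}
With $\Tf=2$, the map $\uf\mapsto \yf$ is an affine map whose linear part is the block-triangular Toeplitz $\begin{bmatrix}D&0\\CB&D\end{bmatrix}$. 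Because $\Tini>\ell$, the initial piece pins down $\xini$ uniquely in every scenario that contains a true input-output partitioning, so each of the tests \circled{1}--\circled{4} reduces to a question about solvability of a structured linear system involving only the submatrices of $D$, the product $CB$, and the observability matrix $\mathscr{O}_{\Tini}(A,C)$.

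For the three ``no-latent'' scenarios (Relations 1, 2, 3) the correspondence is direct. In Relation 1 both $\theta$ and $\psi$ are free inputs, so equation~\eqref{eq:key_equation} becomes vacuous: every $\uf$ and every $\yf$ is independently admissible, making all four tests fail. In Relations 2 and 3 the variable assigned to the system input is an input and its partner is the output; test \circled{1} (respectively \circled{2}) holds because the Toeplitz map from full input to full output is a well-defined function by Lemma~\ref{lm:unique_ini}, while test \circled{2} (respectively \circled{1}) fails precisely because, under the hypothesis on $[C_i,D_{i1},D_{i2}]$ applied to the appropriate block, the output Toeplitz lacks either surjectivity or injectivity; tests \circled{3}, \circled{4} are then read off from the nonzero diagonal entries $D_{11}, D_{22}$, which guarantee that a consistent one-step partial solution can be blocked from being extended to a two-step solution by a suitable choice of $\yf(2)$ or $\uf(2)$, exhibiting the required inconsistency.

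For the latent-input scenarios (Relations 4, 5 with a hidden $v=u_2$, and Relation 6 where both $\thetad,\psid$ are outputs driven by a latent $v$), I would first enlarge equation~\eqref{eq:key_equation} by appending the unobserved $v$-block and re-apply Lemma~\ref{lm:new_fundamental_lemma} to the extended system, noting that existence of a $g$ satisfying the observed equation is equivalent to existence of some latent trajectory $\vf$ that extends $\vini$ consistently. After this reduction each test becomes a linear-algebraic question about the same Toeplitz, now projected onto the observed output rows and quotiented by the latent-input columns. The hypotheses $D_{11}\neq 0$ and $D_{22}\neq 0$ force the latent input to have a direct feedthrough on at least one observed coordinate, so uniqueness in \circled{1} and \circled{2} fails in Relation 6; the partial-row-rank hypothesis on $[C_i,D_{i1},D_{i2}]$ ensures that the observed output space is strictly smaller than the full range at each step, which produces the True/False pattern of tests \circled{3} and \circled{4} in Relations 4, 5, 6 demanded by Table~\ref{tab:inffering_results}.

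The main obstacle, I expect, will be the bookkeeping: six relations times four tests yield twenty-four subcases, and the hypotheses on $C$ and $D$ split into rather subtle sufficient conditions depending on which block is the observed input and which is the observed output. The cleanest way to organize the argument is probably to first state a single master lemma that writes each test in terms of solvability of a structured linear system involving only $[C_1,D_{11},D_{12}]$, $[C_2,D_{21},D_{22}]$, $D_{11}$, $D_{22}$, $CB$, and $\mathscr{O}_{\Tini}(A,C)$, and then read off each row of Table~\ref{tab:inffering_results} as an immediate corollary. The potentially tricky step is verifying that the partial-row-rank hypothesis is genuinely \emph{sufficient} (not merely necessary) in the partial-causality cases; this is where the choice $\Tf=2$ has to play a decisive role, since longer horizons would introduce additional Toeplitz rows that might accidentally restore surjectivity and spoil the test outcomes.
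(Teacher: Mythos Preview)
Your overall strategy---case-by-case analysis of the six relations, reducing the Hankel condition~\eqref{eq:key_equation} to trajectory membership via Lemma~\ref{lm:new_fundamental_lemma} and then invoking the explicit Toeplitz formula from Lemma~\ref{lm:unique_ini}---is exactly the paper's approach, and the idea of handling the latent-variable relations by appending an unobserved $v$-block is also what the paper does.

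However, your bookkeeping of which hypothesis drives which test is off in several places, and if you executed the proposal as written you would reach for the wrong assumption at the key moments. In Relation~2, tests~\circled{3} and~\circled{4} are \emph{not} decided by $D_{11},D_{22}\neq 0$: test~\circled{3} succeeds because $D$ fails to have full row rank (which follows from the row-rank hypothesis on $[C_i,D_{i1},D_{i2}]$), letting you choose $\yf(2)$ outside $\mathcal{R}(D)$; test~\circled{4} fails automatically because the forward map $\uf\mapsto\yf$ is a function. Conversely, the conditions $D_{11}\neq 0$ and $D_{22}\neq 0$ are what make tests~\circled{1} and~\circled{2} fail in the \emph{partial-causality} Relations~4 and~5 (the latent input perturbs the observed output through $D_{*2}$, breaking uniqueness), not in Relation~6. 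In Relation~6, tests~\circled{1} and~\circled{2} fail by \emph{non-existence}, not non-uniqueness: since $[C_1,D_{1*}]$ is not full row rank, some $\uf(1)$ lies outside its range and admits no compatible $(\xini,\vf(1))$ at all. Finally, your worry that longer horizons ``might accidentally restore surjectivity'' is misplaced: if $[C,D]$ lacks full row rank then so does every $\mathscr{T}_\tau$; the choice $\Tf=2$ matters because tests~\circled{3} and~\circled{4} are intrinsically two-step questions (one-step consistency versus two-step extensibility), not because of any surjectivity repair.
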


\begin{proof}
We prove this theorem by showing that each of the six possible causal relations must lead to the corresponding test results in Table~\ref{tab:inffering_results}.

We start with \textbf{Structure (II), Relation 2 ($\theta \to \psi$)}: The following fact will be repeatedly used: With any identifiable time series $\data$, according to Lemma~\ref{lm:new_fundamental_lemma}, the set of $(\uf,\yf)$ satisfying equation~\eqref{eq:key_equation} is equal to the set of $(\uf,\yf)$ satisfying 
\begin{equation}\label{eq:IIproof}
    \begin{aligned}
        \yf(1) & = C\xini + D\uf(1)\\
        \yf(2) & = CA\xini + CB \uf(1) + D\uf(2),
    \end{aligned}
\end{equation}
where $\xini$, according to Lemma~\ref{lm:unique_ini}, is uniquely determined by $\wini$, since $\Tini>\ell$. Now we go through each test.

\emph{Test $\circled{1}$}: In equation~\eqref{eq:IIproof}, any $\uf\in (\mathbb{R}^{m})^2$ uniquely determines a $\yf\in (\mathbb{R}^{p})^2$. Therefore,  equation~\eqref{eq:key_equation} yields a unique solution $\yf$ for any given $\uf$. That is, the result of Test~$\circled{1}$ must be ``\ding{51}''.

\emph{Test $\circled{2}$}: Since neither $[C_1,D_{11},D_{12}]$ nor $[C_2,D_{21},D_{22}]$ has a full row rank, the matrix $D$ does not have a full row rank and thus the image space $\mathcal{R}(D)\subsetneq \mathbb{R}^{p}$. Given any $\xini$ uniquely determined by $\wini$, let $\yf(1)=C\xini + w$, where $w\in \mathbb{R}^p\setminus \{\mathcal{R}(D)\}$. Then there does not exist any $\yf$ satisfying equation~\eqref{eq:IIproof}, and thereby no $\yf$ satisfies equation~\eqref{eq:key_equation}. Therefore, Test $\circled{2}$ must return the result ``\ding{55}''.  

\emph{Test $\circled{3}$}: For any given $\uf(1)\in \mathbb{R}^m$, $\yf(1)=C\xini+D\uf(1)$, satisfies equation~\eqref{eq:IIproof} where $\xini$ is uniquely determined by $\wini$. Therefore, $(\uf(1),\yf(1))$ also satisfies equation~\eqref{eq:key_equation u}, which is a part of equation~\eqref{eq:key_equation}. Let 
\begin{align*}
    \yf(2) = CA\xini +CB\uf(1) + w,\text{ with }w\in \mathbb{R}^{p}\setminus \mathcal{D}.
\end{align*}
Since $\mathcal{R}(D)\subsetneq \mathbb{R}^p$, there does not exist any $\uf(2)$ satisfying equation~\eqref{eq:IIproof}. Therefore, no $\uf(2)$ satisfies equation~\eqref{eq:key_equation} either. Namely, Test~$\circled{3}$ must return the result ``\ding{51}''.

\emph{Test $\circled{4}$}: Since equation~\eqref{eq:key_equation u} is a part of equation~\eqref{eq:key_equation} with the equations for $(\uf(2),\yf(2)$ removed, any $\uf(1)$ and $\yf(1)$ satisfying equation~\eqref{eq:key_equation u} also satisfies equation~\eqref{eq:IIproof}. Moreover any $\uf(2)\in \mathbb{R}^m$ uniquely determines a $\yf(2)\in \mathbb{R}^p$ in equation~\eqref{eq:IIproof}. Therefore, given any $\uf(1)$ and $\yf(1)$ satisfying equation~\eqref{eq:key_equation u}, and any $\uf(2)\in \mathbb{R}^m$, there exists a unique $\yf(2)$ satisfying equation~\eqref{eq:key_equation}. That is, Test~$\circled{4}$ returns the result ``\ding{55}''.

\textbf{Structure (II), Relation 3 ($\psi \to \theta$)}: The proof is the same as that for Relation 2, except that $\theta$ and $\psi$ are interchanged.

\textbf{Structures (I), (III) and (IV):} One feature these structures have in common is that there exists a hidden input or output variable $v$, which generates a tiem series $v_d$ such that $\overline{w_d}=\col(\thetad,\psid,\vd)$ satisfies the identifiable conditions, but $v_d$ is not included in the collected data $\data=(\thetad,\psid)$. Let $\vini$ be the first $\Tini$-length series of $v_d$. Any future 2-length trajectory $(\uf,\yf,\vf)$ following the time series $\overline{w}_{\text{ini}}$ satisfies not only equation~\eqref{eq:key_equation} but also
\begin{equation}\label{eq:key_equation I}
    \begin{bmatrix}
        V_{\mathrm{p}}\\
        V_{\mathrm{f}}
    \end{bmatrix}g=
    \begin{bmatrix}
        \vini\\
        \vf
    \end{bmatrix}
\end{equation}
In equations~\eqref{eq:key_equation} and~\eqref{eq:key_equation I}, $\uf,\yf,\vf$, and $g$ are the unknown variables. Since $\Tini>\ell$, $\uini,\yini$ and $\vini$ together uniquely determine $\xini$. However, since the data of $\vini$ is not collected and thus unknown, there could be multiple $\xini$'s compatible with $(\uini,\yini)$. For simplicity of notations, let 
\begin{align*}
    D_{*1}&=\col(D_{11},D_{21}),\,\, D_{*2}=\col(D_{12},D_{22}),\\
    D_{1*} & = [D_{11},D_{12}],\,\, D_{2*}=[D_{21},D_{22}]. 
\end{align*}

\textbf{Structure (III), Relation 4 ($\theta,\, (v) \to \psi$)}: Accoridng to Lemma~\ref{lm:new_fundamental_lemma}, the set of $(\uf,\yf)$ satisfying equation~\eqref{eq:key_equation} is equal to that satisfying 
\begin{equation}\label{eq:IIIproof}
    \begin{aligned}
        \yf(1) & = C\xini+D_{*1} \uf(1) + D_{*2}\vf(1),\\
        \yf(2) & = CA \xini + CB_1 \uf(1) + CB_2 \vf(1)\\
        &\quad + D_{*1}\uf(2) + D_{*2}\yf(2),
    \end{aligned}
\end{equation}
where $\vf$ is a free variable and $\xini$, by Lemma~\ref{lm:unique_ini}, is compatible with but not uniquely determined by $(\uini,\yini)$, since $\vini$ is unknown. Now we analyze each test.

\emph{Test $\circled{1}$}: Since $D_{22}\neq 0$ and $\vf(1)$ is a free variable, any given $\uf(1)$ alone does not uniquely determine $\yf(1)$ in equation~\eqref{eq:IIIproof} and thereby does not in equation~\eqref{eq:key_equation} either. Therefore, Test $\circled{1}$ returns the result ``\ding{55}''.

\emph{Test $\circled{2}$}: Since neither $[C_1, D_{1*}]$ nor $[C_2,D_{2*}]$ has a full row-rank, neither does $[C,D_{*1},D_{*1}]$. As a result, $\mathcal{R}([C,D_{*1},D_{*2}])\subsetneq \mathbb{R}^p$. For any $\yf(1)\in \mathbb{R}^p\setminus \mathcal{R}([C,D_{*1},D_{*2}])$, there does not exist any $\uf(1)$ and $\yf(1)$ satisfying equation~\eqref{eq:IIIproof}, and tehreby no $\uf(1)$ satisfies equation~\eqref{eq:key_equation}. That is, Test $\circled{2}$ must return the result ``\ding{55}''.

\emph{Test $\circled{3}$}: For any $\uf(1)$ and $\vf(1)$, let $\yf(1)$ be given by equation~\eqref{eq:IIIproof}. Then $(\uf(1),\yf(1))$ satisfy equation~\ref{eq:key_equation u}, which is a part of equation~\eqref{eq:key_equation}. Moreover, since neither $[C_1,D_{11},D_{12}]$ nor $[C_2,D_{21},D_{22}]$ has a full row-rank, $[C,D_{*1},D_{*2}]$ does not have a full row-rank either. As a result, $[CA, CB_1, CB_2, D_{*1},D_{*2}]$ does not have a full row-rank and thus $\mathcal{R}\big( [CA, CB_1, CB_2, CD_{*1}, CD_{*2}] \big)\subsetneq \mathbb{R}^p$. Pick a $\yf(2)\in \mathbb{R}^p\setminus \mathcal{R}\big( [CA, CB_1, CB_2, CD_{*1}, CD_{*2}] \big)$. Then no $\yf(2)$ satisfies equation~\eqref{eq:IIIproof} and thus no $\yf(2)$ satisfies equation~\eqref{eq:key_equation}. Therefore, Test $\circled{3}$ must return ``\ding{51}''.

\emph{Test $\circled{4}$}: We have shown that any pair $(\uf(1),\yf(1))$ given by equation~\eqref{eq:IIIproof} also satisfy equation~\eqref{eq:key_equation} and thus equation~\eqref{eq:key_equation u}. Moreover, for any such $(\uf(1),\yf(1))$, any $\uf(2)$, and any $\vf(2)$, $\yf(2)$ given by equation~\eqref{eq:key_equation} always satisfies equation~\eqref{eq:key_equation} and thus equation~\eqref{eq:key_equation u}. That is, Test $\circled{4}$ must return the result ``\ding{55}''.

\textbf{Structure (III), Relation 5 ($\psi,\, (v) \to \theta$)}: The proof is the same as that for Relation 4 except that the symbols $\theta$ and $\psi$ are interchanged.

\textbf{Structure (IV), Relation 6 ($ (v) \to \theta,\, \psi$)}: Accoridng to Lemma~\ref{lm:new_fundamental_lemma}, the set of $(\uf,\yf)$ satisfying equation~\eqref{eq:key_equation} is equal to that satisfying 
\begin{equation}\label{eq:IVproof}
    \begin{aligned}
        \uf(1) & = C_1\xini + D_{1*}\vf(1),\\
        \yf(1) & = C_2\xini + D_{2*}\vf(1),\\
        \uf(2) & = C_1A\xini + CB \vf(1) + D_{1*}\vf(2),\\
        \yf(2) & = C_2A\xini + CB \vf(1) + D_{2*}\vf(2),
    \end{aligned}
\end{equation}
where $\vf$ is a free variable and $\xini$, by Lemma~\ref{lm:unique_ini}, is compatible with but not uniquely determined by $(\uini,\yini)$ since $\vini$ is unknown.

\emph{Test $\circled{1}$}: Since $[C_1,D_{1*}]$ does not have a full row rank, we have $\mathcal{R}\big( [C_1,D_{1*}] \big)\subsetneq \mathbb{R}^m$. As a result, for any given $\uf(1)\in \mathbb{R}^m\setminus \mathcal{R}\big( [C_1,D_{1*}] \big)$, there does not exist any $\uf(1)$ satisfying equation~\eqref{eq:IVproof} and thereby $\yf(1)$ does not exist either. Such a pair $(\uf(1),\yf(1))$ will not satisfy equation~\eqref{eq:key_equation}. Therefore, Test $\circled{1}$ will return the result ``\ding{55}''.

\emph{Test $\circled{2}$}: Due to the symmertry between $\theta$ and $\psi$ in Structure IV, the proof for Test $\circled{2}$ is the same as the proof for Test $\circled{1}$, except that the symbols $\theta$ and $\psi$ are interchanged.

\emph{Test $\circled{3}$}: For any $\vf(1)$, by Lemma~\ref{lm:new_fundamental_lemma}, $\uf(1)$ and $\yf(1)$ given by equation~\eqref{eq:IVproof} satisfy equation~\eqref{eq:key_equation}. Moreover, since $[C_1,D_{1*}]$ does not have a full row rank, $[C_1A,CB,D_{1*}]$ does not have a full row rank either, and thereby $\mathcal{R}\big( [C_1A,CB,D_{1*}] \big)\subsetneq \mathbb{R}^m$. As a result, for any given $\uf(2)\in \mathbb{R}^m\setminus \mathcal{R}\big( [C_1A,CB,D_{1*}] \big)$, no $\vf$ satisfies equation~\eqref{eq:IVproof}. Therefore, such $(\uf(1),\yf(1),\uf(2))$ is not a feasible trajectory and does not satisfy equation~\eqref{eq:key_equation}. That is, Test $\circled{1}$ must return the result ``\ding{55}''.

\emph{Test $\circled{4}$}: The proof is the same as the proof for Test $\circled{3}$, except that the symbols $\theta$ and $\psi$ are interchanged.

\textbf{Structure (I), Relation 1 ($ \theta,\, \psi \to (v)$)}: In this structure, since $\uf$ and $\yf$ are free variables, Test $\circled{1}$-$\circled{4}$ will all return the result ``\ding{55}''. This concludes the proof.
\end{proof}

\begin{remark}
One might find a ``loophole'' in the BeCaus test: Suppose the underlying dynamics is Structure I. Since, in this case, both $\theta$ and $\psi$ are free variables, what if the underlying system accidentally generates an offline trajectory $(\theta_d,\psi_d)$ that coincides with some trajectory generated by Structure II, III, or IV? In fact, the identifiable-data condition ensures that, if the offline data $(\theta_d,\psi_d)$ coincides with the data generated by Structure II, III or IV and satisfies their corresponding identifiability condition, then it must not satisfy the identifiability condition for Structure I and is thus precluded. To put it in another way, if $\theta(t)$ and $\psi(t)$ are randomly generated according to some i.i.d distributions, then almost surely the collected data $(\theta_d,\psi_d)$ almost surely will not coincide with the data generated by Structure II, III or IV.
\end{remark}

\begin{remark} The conditions in Theorem~\ref{thm:causality-discoverable-theorem} have clear control-theoretic interpretations.  The condition that $[C_1, D_{11}, D_{12}]$ and $[C_2, D_{21}, D_{22}]$ are not full row rank implies that each output is not controllable in one period, while $D_{11}, D_{22} \neq 0$ indicates that the inputs affect outputs without delay. In face, the proof of Theorem~\ref{thm:causality-discoverable-theorem} reveals that the required conditions for each model can be slightly weaker than the unified condition in Theorem~\ref{thm:causality-discoverable-theorem}.
In particular, the conclusions Table~\ref{tab:inffering_results} hold if the following conditions are satisfied:
\begin{enumerate}
    \item If Structure (II) occurs, $D$ is not full row rank;
    \item If Structure (III) occurs, $[C,D_u, D_v]$ is not full row rank, $ D_v \neq 0$;
    \item If Structure (IV) occurs, $[C_1, D_\theta]$ and $[C_2, D_\psi]$ are not full row rank, $D_\theta, D_\psi \neq 0$.  Matrices $D_u$, $D_v$, $D_\theta$, and $D_\psi$ are defined in the proof.
\end{enumerate}
\end{remark}

\section{Comparisons with Granger Causality and Further Discussions}
In this section, we compare the proposed BeCaus test with the Granger causality test and then discussion a tentative extension of our approach to nonlinear systems.

\subsection{Comparing with Granger causality test via examples}
In this subsection, we compare the Granger causality test with the proposed BeCaus tests via some simple examples of LTI systems. In the following examples, the state variable of the underlying systems is set to be two-dimensional, with the initial state $x(0)=[1,0]^{\top}$. The length of the collected offline time series is $T=50$, all satisfying the identifiable-data conditions. It turns out that the Granger test returns incorrect results in all the examples except Example 1, while the results of the BeCaus test are correct in all the examples.

\begin{figure*}
    \centering
    \includegraphics[width=1\linewidth]{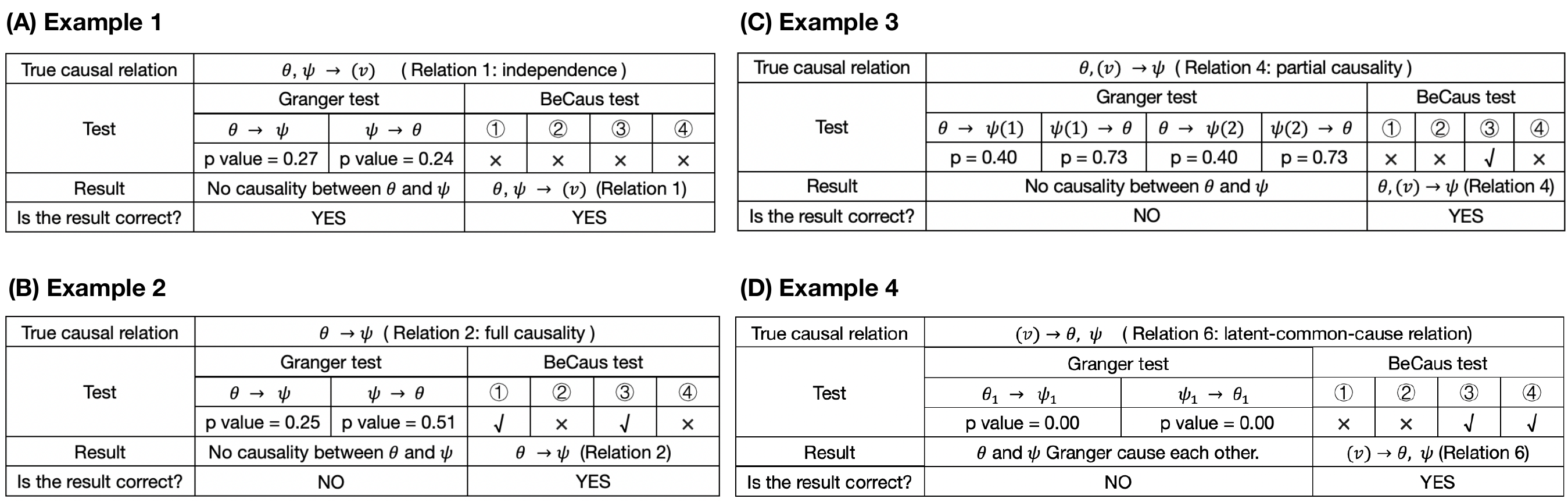}
    \caption{Detailed test results for Examples 1~4.}
    \label{fig:examples}
\end{figure*}

\begin{example}[Structure  (I)]
In this example, $\theta$ and $\psi$ are both scalar free variables. The entries in the $T$-length sequence $\theta_d$ are independently randomly generated from the uniform distribution on $(-1,1)$, while the entries of $\psi_d$ are independently randomly generated from the uniform distribution on $(-10,10)$. 
We verify that both \( \theta_d \) and \( \psi_d \) are stationary by using the Augmented Dickey-Fuller (ADF) test.
Both the Granger and BeCaus tests indicate that there is no causal relation between the two variables; see Fig.~\ref{fig:examples}(A).
\end{example}

\begin{example}[Structure (II), $\theta\rightarrow \psi$]
In this example, we construct the following LTI system:
\begin{align*}
    x(t+1) & =
    \begin{bmatrix}
        1 & -0.5\\
        0.5 & 1
    \end{bmatrix}x(t) + 
    \begin{bmatrix}
        -0.5\\
        2
    \end{bmatrix}\theta(t),\\
    \psi(t) & =
    \begin{bmatrix}
        2 & -2
    \end{bmatrix}x(t).
\end{align*}
The offline data $\theta_d$ is independently randomly generated from a uniform distribution over $[0,1]$, and the corresponding output $\psi_d$ is obtained. Both $\theta_d$ and $\psi_d$ are stationary time series, so the direct Granger test is applicable. It turns out that the BeCaus test returns the correct result, while the Granger causality test fails, see Fig.~\ref{fig:examples}(B).
\end{example}

\begin{example}[Structure (III), $\theta,(v)\rightarrow \psi$)]
In this example, the underlying system is
\begin{align*}
    x(t+1) & = 
    \begin{bmatrix}
       1.5 & -0.5\\
       0.5 & 0.8
    \end{bmatrix}x(t) + 
    \begin{bmatrix}
        -0.5\\
        2
    \end{bmatrix}\theta(t) +
    \begin{bmatrix}
        1.5\\
        -2
    \end{bmatrix}v(t),\\
    \begin{bmatrix}
        \psi_1(t)\\
        \psi_2(t)
    \end{bmatrix}
     & = 
    \begin{bmatrix}
        2 & -2 \\
        1 & -1
    \end{bmatrix}x(t) +
    \begin{bmatrix}
        2\\
        1
    \end{bmatrix}\theta(t)+
    \begin{bmatrix}
        2\\
        1
    \end{bmatrix}v(t).
\end{align*}   
Here the input series $\theta_d$ and $v_d$ are independently randomly generated from uniform distributions on $U[-1,1]$ and $U[-10,10]$ respectively. The corresponding output series $\psi_d$ is collected.  The Granger test yields an incorrect result that there is no causal relation between $\theta$ and either $\psi_1$ or $\psi_2$. The result of the BeCaus test is correct, see Fig.~\ref{fig:examples}(C).
\end{example}
\begin{example}[Structure (IV), $(v)\to \theta, \psi$]
In this example, the underlying system is 
\begin{align*}
    x(t+1) & = 
    \begin{bmatrix}
        0.5 & -0.5\\
        0.5 & 0.5
    \end{bmatrix}x(t) + 
    \begin{bmatrix}
        -0.5 & 3\\
        -2 & 1
    \end{bmatrix}
    \begin{bmatrix}
        v_1(t)\\
        v_2(t)
    \end{bmatrix},\\
    \begin{bmatrix}
        \theta_1(t)\\
        \theta_2(t)\\
        \psi_1(t)\\
        \psi_2(t)
    \end{bmatrix}
    & =
    \begin{bmatrix}
        1 & -2\\
        0 & 0\\
        2 & 0.5\\
        0 & 0
    \end{bmatrix}x(t)+
    \begin{bmatrix}
        1 & 0\\
        0 & 0\\
        1 & 0\\
        0 & 0
    \end{bmatrix}
    \begin{bmatrix}
        v_1(t)\\
        v_2(t)
    \end{bmatrix}.
\end{align*}
In this system, $\theta_2(t)$ and $\psi_2(t)$ are constantly zero. Only the time series of $\theta_1$ and $\psi_1$ are collected, denoted by $\theta_d$ and $\psi_d$ respectively. The latent inputs $v_1$ and $v_2$ are independently randomly generated from the uniform distribution on $[0,1]$. The Granger test indicates that $\theta_1$ and $\psi_1$ causes each other, which is incorrect. The BeCaus test returns the correct result, see Fig.\ref{fig:examples}(D).
\end{example}
\subsection{An exploratory case study for nonlinear systems}
In this subsection, we briefly explore the potential of applying the behavioral-system theory to causality discovery for nonlinear systems. Consider the following underlying system:
\begin{align*}
    x(t+1) = \tanh(Ax(t) + B\theta(t),\,\, \psi(t)=Cx(t),
\end{align*}
where $A$, $B$, and $C$ are taken from the simulation setup in Section V of \cite{Fiedler2021Relationship}. Given the offline data $(\theta_d,\psi_d)$, we distinguish the causal relations between $\theta$ and $\psi$ by solving the following two fictitious optimal control problems:
\begin{align}
    \label{eq:nonlinear_1}\min_{\uf,\yf,g} & ~\|\yf(2)-r\mathbf{1}_m\|^2+\|\uf\|^2+\|g\|_1, \\
    \label{eq:nonlinear_2}\min_{\uf,\yf,g} & ~\|\uf(2)-r\mathbf{1}_p\|^2+\|\yf\|^2+\|g\|_1, 
\end{align}
with the constraints given by equation~\eqref{eq:key_equation}. Here $r$ is a reference signal set to be a large value, say 1000. Intuitively, driving the input signal to match a target value after two steps may cause large variations between consecutive steps, while the output signal typically exhibits smoother transitions due to its dependence on previous outputs. Consistent with such intuition, solving~\eqref{eq:nonlinear_1} yields 
\( \|\psi^\star(2)\| / \|\psi^\star(1)\| = 2.432 \), while solving~\eqref{eq:nonlinear_2} gives 
\( \|\theta^\star(2)\| / \|\theta^\star(1)\| = 61.425 \). The significant difference between these two indices suggests that \( \theta \) is more likely to be the input variable. This exploratory case study indicates that the intrinsic input-output asymmetry is still preserved in equation~\eqref{eq:key_equation}, which is no longer a precise representation of the underlying nonlinear dynamics though. Such asymmetry could still be leveraged for causality discovery.

\section{Conclusion and Discussion}\label{sec:conclusion}
This paper investigates causal disocvery from time series using a behavioral systems perspective. 
We introduce the concept of \emph{mechanistic causality}, which captures the underlying structural dependencies between variables, rather than relying solely on predictive statistics. 
Focusing on deterministic linear time-invariant (LTI) systems, we analyze four representative causal structures and propose the BeCaus test that distinguish different causal directions without explicit model identification. It differs from the classic Granger causality test in the sense of how causality are conceptualized. Moreover, in deterministic LTI systems with identifiable data and without time delay, the BeCaus test outperforms the Granger causality test, since the former is always correct while the latter often yield spurious results.

To the best of our knowledge, this work is the first to bridge causal discovery with behavioral systems theory. 
This interaction offers a novel perspective on causality and holds strong potential to enrich existing theoretical frameworks. 
Several promising extensions lie beyond the scope of deterministic LTI systems, including noisy, nonlinear, and time-delay open systems. 






\bibliographystyle{IEEEtran}
\bibliography{IEEEabrv,mybibfile}

\end{document}